\documentclass[a4paper,fleqn,reqno]{amsart}
\pdfoutput=1
\usepackage{times}

\usepackage[utf8]{inputenc}
\usepackage[
            T1]{fontenc}
\usepackage[
            ngerman,
            british]{babel}
\usepackage{amssymb}

\usepackage[left=26mm,right=26mm,top=44mm,bottom=33mm]{geometry}

\usepackage{xcolor}
\usepackage[
        colorlinks=false, pdfborder={0 0 .5}, 
        citecolor=DarkGreen,linktoc=all,unicode=true,
 	pdfauthor={Alexander Dyachenko},
 	pdfsubject={MSC(2010) 30E05, 46C20},
 	pdfkeywords={Nevanlinna-Pick interpolation, Indefinite metric, Pontryagin spaces},
 	pdfpagemode=UseOutlines,pdflang=en-GB,
        pdfa=true]{hyperref}
\hypersetup {
 	pdftitle={Functions of classes N⁺\_ϰ},
}

\binoppenalty=9000
\relpenalty=9000

\usepackage[selected=true,shrink=10,stretch=30,letterspace=30]{microtype}
        
\usepackage{extarrows}
\usepackage{colonequals}

\theoremstyle{plain}
\newtheorem{theorem}{Theorem}
\newtheorem{prop*}[theorem]{Proposition}
\theoremstyle{definition}
\newtheorem{remark*}[theorem]{Remark}
\newtheorem{definition*}[theorem]{Definition}

\renewcommand{\le}{\leqslant}
\renewcommand{\ge}{\geqslant}

\begin{document}

\subjclass[2010]{30E05, 46C20}
\keywords{Nevanlinna-Pick interpolation, Indefinite metric, Pontryagin spaces, Moment problem}
\title{Functions of classes~$\mathcal N_\varkappa^+$}
\author{Alexander Dyachenko}
\email{dyachenk@math.tu-berlin.de}
\email{diachenko@sfedu.ru}
\thanks{This work was financially supported by the European Research Council under the
    European Union's Seventh Framework Programme (FP7/2007--2013)/ERC grant agreement no.
    259173.}
\address{TU-Berlin, MA 4-2, Stra\ss e des 17. Juni 136, 10623 Berlin, Germany}

\begin{abstract}
    In the present note we give an elementary proof of the necessary and sufficient condition
    for a univariate function to belong the class~$\mathcal N_\varkappa^+$. This class was
    introduced mainly to deal with the indefinite version of the Stieltjes moment problem (and
    corresponding $\pi$-Hermitian operators), although it is applicable beyond the original
    scope. The proof relies on asymptotic analysis of the corresponding Hermitian forms. Our
    result closes a gap in the criterion given by Krein and Langer in their joint paper of~1977.
    The correct condition was stated by Langer and Winkler in~1998, although they provided no
    proper reasoning.
\end{abstract}
\maketitle

\vspace{-1.5em}
\section{Introduction}
The function classes~$\mathcal N_\varkappa$ with $\varkappa=0,1,\dots$ were introduced in the
prominent paper~\cite{KreinLanger1} of M.~Krein and H.~Langer. They serve as a natural
generalisation of the Nevanlinna class~$\mathcal N \colonequals \mathcal N_0$ of all holomorphic
mappings~$\mathbb{C}_+\to\mathbb{C}_+$, which are also known as~$\mathcal R$-functions
(here~$\mathbb{C}_+\colonequals\{z\in\mathbb{C}:\Im z>0\}$ is the upper half of the complex
plane). A function~$\varphi(z)$ belongs to~$\mathcal N_\varkappa$ whenever it is meromorphic
in~$\mathbb{C}_+$, for any set of non-real points~$z_1$, $z_2$, \dots, $z_k$ the Hermitian
form
\begin{equation}\label{eq:quad_form}
    h_\varphi(\xi_1, \dots, \xi_k \vert z_1,\dots, z_k)\colonequals
    \sum_{n,m=0}^k \frac{\varphi(z_m) - \overline{\varphi(z_n)}}
            {z_m-\overline z_n}\xi_m\overline\xi_n
\end{equation}
has at most~$\varkappa$ negative squares and for some set of points there are
exactly~$\varkappa$ negative squares. It is convenient (and generally accepted) to define
$\mathcal N_\varkappa$\nobreakdash-functions in the lower half of the complex plane by complex
conjugation, i.e.~$\overline{\varphi(z)}=\varphi(\overline z)$.
A significant particular case is presented by the classes~$\mathcal N_\varkappa^+$, which are
considered here. They contain all $\mathcal N_\varkappa$-functions~$\varphi(z)$ such
that~$z\varphi(z)$ belongs to~$\mathcal N$. Among various applications, $\mathcal N$, \
$\mathcal N_\varkappa$ and~$\mathcal N_\varkappa^+$ appear in the moment problems and have
connections to the spectral theory of operators. However, the classes~$\mathcal N_\varkappa^+$
can find even more applications as a foremost generalisation of the Stieltjes
functions~$\mathcal N_0^+$.

This note aims at obtaining the necessary and sufficient condition for a function to be in the
class~$\mathcal N_\varkappa^+$ through the asymptotic analysis of the corresponding Hermitian
forms. As a main tool, we use the basic Nevanlinna-Pick theory for the halfplane within the
framework presented, for example, in~\cite[Chapter~3]{Akhiezer65}
or~\cite[Chapters~II--III]{Donoghue}. We show that, roughly speaking, $\mathcal N_\varkappa^+$
differs from the Stieltjes class~$\mathcal N_0^+$ in having~$\varkappa$ simple negative poles,
one of which can reach the origin and merge there into another singularity. More precisely,
\emph{a function~$\varphi(z)$ belongs to the class~$\mathcal N_\varkappa^+$ if and only if it
    has one of the forms
\begin{gather}
    \tag{A}\label{eq:repr_th_3.8_a}
    \varphi(z)=s_0+\sum_{j=1}^\varkappa\frac{\gamma_j}{\alpha_j-z}+\int_0^\infty\frac{d\nu(t)}{t-z};\\
    \tag{B}\label{eq:repr_th_3.8_b}
    \varphi(z)=s_0+\frac{s_1}{z}-\frac{s_2}{z^2}+\sum_{j=1}^{\varkappa-1}\frac{\gamma_j}{\alpha_j-z}
    +\int_0^\infty\frac{d\nu(t)}{t-z},
    \quad\text{where }
    \max\{s_1,s_2\}>0,\ \nu(+0)=0;\\
    \notag
    \varphi(z)=s_0+\frac{s_1}{z}-\frac{s_2}{z^2}+\sum_{j=1}^{\varkappa-1}\frac{\gamma_j}{\alpha_j-z}
    +\frac 1z\int_0^\infty\left(\frac{1}{t-z}-\frac t{1+t^2}\right)d\sigma(t),\\
    \tag{C}\label{eq:nu_inf}
    \hspace{52mm}
    \text{where }
    \int_0^1\frac{d\sigma(t)}t = \infty,\ \sigma(0+)=0,\ 
    \int_0^{\infty}\frac{d\sigma(t)}{1+t^2} < \infty.
\end{gather}
Here $s_0,s_2\ge 0$, ~$s_1\in\mathbb{R}$, ~$\gamma_j,\alpha_j<0$ for~$j=1,2,\dots,\varkappa$ and
$\nu(t),\sigma(t)$ are nondecreasing left-continuous functions such that~$\nu(0)=\sigma(0)=0$
and~$\int_0^{\infty}\frac{d\nu(t)}{1+t} < \infty$.}
The function~$\sigma(t)$ is intentionally denoted by a distinct letter to emphasize that its
constraints at infinity are weaker. For our purposes, it is more convenient to formulate this
criterion as Theorem~\ref{th:1}: to give the corresponding formulae for functions~$\Phi(z)$ such
that~$\varphi(z)\colonequals\frac 1z\Phi(z)$ belong to the class~$\mathcal N_\varkappa^+$. At
that, the case~\eqref{eq:repr_th_3.8_a} corresponds to the
representation~\eqref{eq:phi_form_2_with_cond}, and the cases~\eqref{eq:repr_th_3.8_b}
and~\eqref{eq:nu_inf} correspond to the representation~\eqref{eq:phi_form_1_with_cond}. The
intermediary Propositions~\ref{prop:1} and~\ref{prop:2} are more general than required for
proving Theorem~\ref{th:1} and can be interesting per se.

This result corrects Theorem~3.8 of~\cite{KreinLanger1}: the authors put the
condition~$\int_0^{\infty}\frac{d\sigma(t)}{1+t} < +\infty$ in the case~\eqref{eq:nu_inf}. As a
result, 
Theorem~3.8 fails to address~$\mathcal N_\varkappa^+$\nobreakdash-functions like
\[
\psi(z)=\frac 1z\cdot\left(\frac{1}{\sqrt{z}}\cot\frac{1}{\sqrt{z}}-\sqrt{z}\cot\sqrt{z}\right)
\]
with~$\varkappa=1$. More than likely, this mistake is just an oversight: for proving the
representations~\eqref{eq:repr_th_3.8_a}--\eqref{eq:repr_th_3.8_b} the authors use, in fact, the
measure~$\frac{d\sigma(t)}t$ as~$d\nu(t)$; then they put~$\nu(t)$ instead of~$\sigma(t)$
in~\eqref{eq:nu_inf}. Furthermore, their proof seems to be less transparent since it involves
operator theory (which is convenient for the general case of~$\mathcal N_\varkappa$). Lemma~5.3
from~\cite[p.~421]{LangerWinkler} (see Theorem~\ref{th:1} herein) has a proper statement, and
the function~$\psi(z)$ is allowed as an entry of~$\mathcal N_1^+$. Unfortunately, the proof
in~\textup{\cite{LangerWinkler}} is invalid. Our proof does not depend on the results
of~\cite{KreinLanger1,LangerWinkler}.

It is worth noting that the gap in~\cite[Theorem~3.8]{KreinLanger1} does not affect dependent
results which assume that the function~$z\varphi(z)$ has an asymptotic expansion of the
form~$\sum_{n=0}^\infty c_nz^n$ as $z\to i\cdot 0+$ or of the form~$\sum_{n=0}^\infty c_nz^{-n}$
as $z\to+\infty\cdot i$. Indeed, in the former case~$\varphi(z)$ can be expressed as
in~\eqref{eq:repr_th_3.8_a} or~\eqref{eq:repr_th_3.8_b}, and in the latter case our
representation~\eqref{eq:nu_inf} reduces (see~\cite[p.~143]{Kac}) to the item~3.\ of Theorem~3.8
in~\cite{KreinLanger1}. On the other hand, the function~$z\psi(z)$, where~$\psi(z)$ is given
above, has no such asymptotic expansions.

\section{Preliminaries}\label{sec:n-functions}
Each $\mathcal N$-function~$\Phi$ has the following integral representation
(see \emph{e.g.}~\cite[p.~92]{Akhiezer65}, \cite[p.~20]{Donoghue}):
\begin{equation}\label{eq:Phi_N}
    \Phi(z)= bz + a + \int_{-\infty}^\infty\left(\frac{1}{t-z}-\frac{t}{1+t^2}\right)d\sigma(t)
\end{equation}
where $a$ is real, $b\ge 0$ and $\sigma(t)$ is a real non-decreasing function satisfying
$\int_{-\infty}^\infty \frac{d\sigma(t)}{1+t^2} < \infty$. The converse is also true: all functions of
the form~\eqref{eq:Phi_N} belong to~$\mathcal N$.

To be definite, we assume that the function~$\sigma(t)$ is \emph{left-continuous}, that
is~$\sigma(t)=\sigma(t-)$ for all~$t\in\mathbb{R}$. Accordingly, the notation for integrals
with respect to~$d\sigma(t)$ is as in the formula
\( \int_\alpha^\beta f(t)\,d\sigma(t) \colonequals \int_{[\alpha,\beta)} f(t)\,d\sigma(t) \) for
arbitrarily taken real numbers~$\alpha,\beta$ and function~$f(t)$.

\begin{remark*}\label{rem:defN} 
    A function~$\Phi$ given by the formula~\eqref{eq:Phi_N} is holomorphic outside the real
    line. Furthermore, it has an analytic continuation through the intervals outside the support
    of $d\sigma$. The function $\varphi(z)\colonequals\Phi(z)/z$ has the same singularities with
    the exception of the origin (generally speaking). We can additionally note that,
    \[
    \text{if}\quad z_1<z_2<t \quad\text{or}\quad t<z_1<z_2,\quad\text{then}\quad
    \frac 1{t-z_2} - \frac 1{t-z_1} =\frac {z_2-z_1}{(t-z_1)(t-z_2)}>0.
    \]
    Consequently, given a real interval~$(\alpha,\beta)$ that has no common points with the
    support of~$d\sigma$, the condition $\alpha<z_1<z_2<\beta$ implies $\Phi(z_1)<\Phi(z_2)$
    unless~$\Phi(z)\equiv a$, which is seen from the representation~\eqref{eq:Phi_N}. (This fact
    is also seen immediately from the definition of the class~$\mathcal N$:
    see~\cite[p.~18]{Donoghue}.) Put in other words, the function~$\Phi(z)$ increases in the
    interval~$(\alpha,\beta)$ unless it is identically constant.
\end{remark*}

\begin{theorem}[Coincides with Lemma~5.3 from~{\cite[p.~421]{LangerWinkler}}]\label{th:1}
    Let a function~$\Phi\in\mathcal N$. The function $\varphi(z)\colonequals \Phi(z)/z$ belongs
    to~$\mathcal N_\varkappa$ if and only if the representation~\eqref{eq:Phi_N} of~$\Phi(z)$ is
    either of the form
    \begin{subequations} \label{eq:phi_form_1_with_cond}
    \begin{equation} \label{eq:phi_form_1}
        \Phi(z)= bz + a + \sum_{n=1}^{\varkappa-1}\frac{\sigma_n}{\lambda_n-z}
        + \int_0^\infty\left(\frac{1}{t-z}-\frac{t}{1+t^2}\right)d\sigma(t)
    \end{equation}
    where~$\lambda_n<0$, \ $n=1,2,\dots,\varkappa-1$ and
    \begin{equation} \label{eq:phi_form_1_cond}
        0<\Phi(0-)
        = a + \sum_{n=1}^{\varkappa-1}\frac{\sigma_n}{\lambda_n}
        + \int_0^\infty\frac{d\sigma(t)}{t+t^3}\le\infty,
    \end{equation}
    \end{subequations}
    or of the form
    \begin{subequations}\label{eq:phi_form_2_with_cond}
    \begin{equation} \label{eq:phi_form_2}
        \Phi(z)= bz + a + \sum_{n=1}^{\varkappa}\frac{\sigma_n}{\lambda_n-z}
        + \int_0^\infty\left(\frac{1}{t-z}-\frac{t}{1+t^2}\right)d\sigma(t),
    \end{equation}
    where~$\lambda_n<0$, \ $n=1,2,\dots,\varkappa$ and
    \begin{equation}\label{eq:phi_form_2_cond}
        \Phi(0-) = a + \sum_{n=1}^{\varkappa}\frac{\sigma_n}{\lambda_n} +
        \int_0^\infty\frac{d\sigma(t)}{t+t^3}\le 0.
    \end{equation}
    \end{subequations}
\end{theorem}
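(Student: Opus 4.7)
The statement is an equivalence, so I would split it into sufficiency and necessity, with the latter being the main work.

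For sufficiency, starting from either representation of $\Phi$, I would directly verify that $\varphi=\Phi/z$ has a Hermitian form with exactly $\varkappa$ negative squares. Decompose $\varphi$ as a sum of elementary pieces: the Stieltjes part coming from $a+bz$ together with the integral on $[0,\infty)$ divided by $z$; the $\varkappa-1$ or $\varkappa$ simple poles at $\lambda_n<0$ arising from $\sigma_n/(\lambda_n-z)$ divided by $z$; and a possible singularity at the origin. A direct rank-one kernel computation shows that each ``wrong-sign'' simple pole on the real axis contributes exactly one negative square to $h_\varphi$, the Stieltjes part contributes none, and in case~\eqref{eq:phi_form_1_with_cond} the origin contributes one more because $\Phi(0-)>0$ forces either a simple pole of $\varphi$ at zero with positive residue or, when $\Phi(0-)=+\infty$, an accumulation of spectral mass at $0$ that still produces exactly one negative square. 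Subadditivity of negative-square counts for kernel sums gives the upper bound $\varkappa$; explicit test vectors placed close to each pole (and near the origin when appropriate) realise the bound.

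For necessity, I would begin from the integral representation~\eqref{eq:Phi_N} of $\Phi$, split the measure as $d\sigma=d\sigma|_{(-\infty,0)}+d\sigma|_{[0,\infty)}$, and first show that the negative part $d\sigma|_{(-\infty,0)}$ is discrete and supported on at most $\varkappa$ atoms $\lambda_1,\dots,\lambda_{\varkappa'}<0$. This is the heart of the asymptotic-analysis step: choosing test points $z_j=\lambda_j+i\epsilon$ at distinct candidate support points $\lambda_j$, the diagonal entries of the matrix underlying $h_\varphi$ behave like $-r_j/\epsilon^2$ (because each such pole is of ``wrong sign'' for~$\varphi$, a fact that follows from the monotonicity granted by the Remark), while off-diagonal entries are only $O(1/\epsilon)$. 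Letting $\epsilon\to 0^+$, the diagonal dominates and the matrix becomes negative definite on the span of the test vectors, forcing at least $\varkappa'$ negative eigenvalues. Continuous mass or infinite atomic support on $(-\infty,0)$ would therefore produce unboundedly many negative squares, contradicting $\varphi\in\mathcal N_\varkappa$. With $d\sigma|_{(-\infty,0)}$ reduced to a finite sum of point masses, I would then distinguish the two representations by the value of $\Phi(0-)\in(-\infty,+\infty]$, which is well-defined since $\Phi$ is monotone on a left neighbourhood of $0$ outside the atom set: if $\Phi(0-)>0$, the origin contributes one extra negative square (a simple pole with positive residue when $\Phi(0-)$ is finite, or a logarithmic-type singularity as in case~(C) of the introduction when $\Phi(0-)=+\infty$), so $\varkappa'=\varkappa-1$ and $\Phi$ takes the form~\eqref{eq:phi_form_1_with_cond}; if $\Phi(0-)\le 0$, the origin contributes nothing extra, so $\varkappa'=\varkappa$ and $\Phi$ takes the form~\eqref{eq:phi_form_2_with_cond}.

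The main obstacle is the origin analysis in the case $\Phi(0-)=+\infty$, produced by $\int_0^1 d\sigma(t)/t=\infty$: here $\varphi$ has no honest pole at $0$ but only spectral mass accumulating toward $0$, and showing that this accumulation contributes \emph{exactly} one negative square — no more and no fewer — requires a careful quantitative estimate of the Hermitian form as the test points simultaneously cluster near $0$. This is precisely the phenomenon that was missed in~\cite[Theorem~3.8]{KreinLanger1}; presumably the intermediary Propositions~\ref{prop:1} and~\ref{prop:2} of the present note are designed to supply exactly this asymptotic estimate cleanly, so that both the upper and lower counts on the number of negative squares at the origin can be read off.
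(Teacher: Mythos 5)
Your plan reproduces correctly the \emph{lower-bound} half of the argument (test points clustering at the negative spectral points, plus one extra point approaching the origin when $\Phi(0-)>0$) and the discreteness step in the necessity part; this is indeed what Propositions~\ref{prop:1} and~\ref{prop:2} deliver. The genuine gap is the \emph{upper bound} on the number of negative squares in case~\eqref{eq:phi_form_1_with_cond}. Your proposed mechanism --- decompose $\varphi$ into elementary kernels and invoke subadditivity, with the origin ``block'' contributing exactly one negative square --- has no argument behind it precisely in the critical situation $\Phi(0-)=+\infty$ with $\int_0^1 t^{-1}\,d\sigma(t)=\infty$ (case~\eqref{eq:nu_inf} of the introduction): there the singularity of $\varphi$ at $0$ comes from $\tfrac1z\int_0^\varepsilon\frac{d\sigma(t)}{t-z}$, which is neither rational nor of finite rank, so no rank-one computation or pole count applies, and showing that this block produces \emph{at most} one negative square is essentially the theorem itself for $\varkappa=1$. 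Deferring to the intermediary propositions does not help, because they only \emph{produce} negative squares (lower bounds) and never exclude them. (Even in the atomic case the decomposition $s_1/z-s_2/z^2$ treated termwise gives the wasteful bound of two negative squares; one must handle the origin as a single block of signature $(1,1)$ --- a minor but real repair.)

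The paper closes this gap by a device absent from your proposal. In case~\eqref{eq:phi_form_2_with_cond} the upper bound does follow from a splitting $\varphi=\widetilde\varphi+\varphi_0$, where $\widetilde\varphi$ collects the $\varkappa$ negative poles (rational and bounded at infinity, hence of rank at most $\varkappa$) and $\varphi_0(z)=b+\Phi(0-)/z+\int_0^\infty\frac{d\widetilde\sigma(t)}{t-z}$ belongs to $\mathcal N$ \emph{because} $\Phi(0-)\le0$. The hard case~\eqref{eq:phi_form_1_with_cond} is then reduced to this one: the regularized function $\varphi_\varepsilon(z)=\Phi(z)/(z+\varepsilon)$ satisfies $\Phi_\varepsilon(z)\colonequals z\varphi_\varepsilon(z)\in\mathcal N$, has the additional pole $-\varepsilon$, and obeys $\Phi_\varepsilon(0-)\le0$, so it is of the form~\eqref{eq:phi_form_2_with_cond} with $\varkappa$ poles and hence $\varphi_\varepsilon\in\mathcal N_\varkappa^+$; since the eigenvalues of the form $h_{\varphi_\varepsilon}(\xi_1,\dots,\xi_k\,\vert\,z_1,\dots,z_k)$ at any fixed points depend continuously on $\varepsilon$, the bound of $\varkappa$ negative squares passes to $\varphi$ as $\varepsilon\to0$. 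Note also that the paper's necessity part avoids your direct recount of negative squares at the origin: once Proposition~\ref{prop:1} forces the negative part of $\sigma$ to be a finite sum of point masses, $\Phi$ has one of the two forms for some $\varkappa_0$, the already-proved sufficiency gives $\varphi\in\mathcal N_{\varkappa_0}^+$, and disjointness of the classes yields $\varkappa_0=\varkappa$. Without the shift-and-continuity argument (or an equivalent substitute) your proof does not close.
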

Note that~$\mathcal N$-functions of the forms~\eqref{eq:phi_form_1} and~\eqref{eq:phi_form_2}
have the corresponding limit~$\Phi(0-)$ defined, because (when non-constant) they grow
monotonically (see Remark~\ref{rem:defN}) outside the support of corresponding
measure~$d\sigma(t)$. Moreover, all numbers~$\sigma_n$
in~\eqref{eq:phi_form_1_with_cond}--\eqref{eq:phi_form_2_with_cond} are positive due to the
condition~$\Phi\in\mathcal N$.

\section{Proofs}\label{sec:proofs}
\begin{definition*}\label{def:poi_inc}
    A real point~$\lambda$ is called a \emph{point of increase} of a function~$\sigma(t)$ if
    $\sigma(\lambda+\varepsilon)>\sigma(\lambda-\varepsilon)$ for every~$\varepsilon>0$ small
    enough. In particular, the set of all points of increase of a non-decreasing
    function~$\sigma(t)$ is the support of~$d\sigma(t)$.
\end{definition*}
In each punctured neighbourhood of the point of increase~$\lambda$ there exists~$\lambda'$ such
that the limit
\[
\sigma'(\lambda')
= \lim_{\varepsilon\to 0}
    \frac{\sigma(\lambda'+\varepsilon)-\sigma(\lambda'-\varepsilon)}{2\varepsilon}
\]
is positive or nonexistent. Indeed, otherwise $\sigma'(t)\le 0$ in the closed
interval~$-\varepsilon\le t\le \varepsilon$ for some~$\varepsilon$ small enough, thus
integrating~$\sigma'(t)$ over this interval leads us to a contradiction. Consequently, if we
know that the function~$\sigma(t)$ has at least $\varkappa$ negative points of increase, then we
always can select $\varkappa$ points of
increase~$\lambda_\varkappa<\lambda_{\varkappa-1}<\dots<\lambda_1<0$, in which the
derivative~$\sigma'$ is nonexistent or positive. Given such a set of points, denote
\begin{equation}\label{eq:def_delta}
\delta\colonequals
\frac 13 \min\Big\{-\lambda_1,\min_{1\le n\le\varkappa-1}(\lambda_{n}-\lambda_{n+1})\Big\},
\end{equation}
put $U_n\colonequals(\lambda_n-\delta,\lambda_n+\delta)$, where $n=1,\dots,\varkappa$, and
$U_0\colonequals\mathbb{R}\setminus\left(\bigcup_{n=1}^{\varkappa} U_n\right)$.

\begin{prop*}\label{prop:1}
    Consider a function~$\Phi(z)=z\varphi(z)$ of the form~\eqref{eq:Phi_N}. Let the
    function~$\sigma(t)$ have at least~$\varkappa$ negative points of
    increase~$\lambda_\varkappa<\dots<\lambda_1$ in which the derivative~$\sigma'$ is
    nonexistent or positive. Then the Hermitian
    form~\( h_\varphi(\xi_1, \dots, \xi_\varkappa \vert \lambda_1+i\eta,\dots,
    \lambda_\varkappa+i\eta) \) defined in~\eqref{eq:quad_form} has~$\varkappa$ negative squares
    for some small values of~$\eta>0$.
\end{prop*}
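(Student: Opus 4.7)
The plan is to show that, for all sufficiently small $\eta>0$, the Hermitian matrix
\[
\mathcal{M}_\eta := \bigl[K_\varphi(z_m,z_n)\bigr]_{m,n=1}^\varkappa,\qquad z_m := \lambda_m+i\eta,
\]
is strictly negative definite. Since $\mathcal M_\eta$ is the Gram matrix of the form $h_\varphi$ at these test points, its negative definiteness immediately yields exactly $\varkappa$ negative squares.

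The first step is a growth lemma: $\Im\Phi(\lambda_m+i\eta)/\eta\to+\infty$ as $\eta\downarrow 0$. From the Nevanlinna representation~\eqref{eq:Phi_N},
\[
\frac{\Im\Phi(\lambda_m+i\eta)}{\eta} = b + \int\frac{d\sigma(t)}{(t-\lambda_m)^{2}+\eta^{2}},
\]
which monotonically increases to $b+\int d\sigma(t)/(t-\lambda_m)^{2}$ as $\eta\downarrow 0$. The hypothesis on $\sigma'(\lambda_m)$ forces the latter limit to be $+\infty$: if it were finite, the estimate $\sigma(\lambda_m+\epsilon)-\sigma(\lambda_m-\epsilon) \le \epsilon^{2}\int_{|t-\lambda_m|<\epsilon}d\sigma(t)/(t-\lambda_m)^{2}$ combined with absolute continuity of the integral would give $\sigma'(\lambda_m)=0$, a contradiction. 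Setting $P_m:=\Re\Phi(z_m)$ and $Q_m:=\Im\Phi(z_m)$, the direct computation
\[
K_\varphi(z_m,z_m) = \frac{\Im\varphi(z_m)}{\eta} = \frac{\lambda_m Q_m - \eta P_m}{\eta(\lambda_m^{2}+\eta^{2})}
\]
then shows each diagonal entry diverges to $-\infty$: the leading term $Q_m/(\lambda_m\eta)$ is negative since $\lambda_m<0$ and blows up, while the $P_m$-correction is of strictly lower order by standard Nevanlinna growth.

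The second step controls the off-diagonal entries. For $m\ne n$, the denominator $|z_m-\bar z_n|\ge|\lambda_m-\lambda_n|-2\eta$ is bounded below. Decomposing $\sigma$ according to the partition $U_0,U_1,\ldots,U_\varkappa$ introduced in the excerpt, I write $\Phi=bz+a+\sum_{k=0}^\varkappa\Phi_k$, where each $\Phi_k\in\mathcal N$ arises from the restriction of $d\sigma$ to $U_k$. For $k\ne m$ the function $\Phi_k$ is analytic in the disk of radius $\delta$ around $\lambda_m$, so $\Phi_k(z_m)$ is uniformly bounded in small $\eta$; consequently $\Phi(z_m)=\Phi_m(z_m)+O(1)$ and all unbounded behaviour of $\Phi(z_m)$ is concentrated in the local piece $\Phi_m$. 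A short analysis of the compactly supported measure $d\sigma|_{U_m}$ yields the key ratio estimate $|\Phi_m(z_m)|\cdot\eta/\Im\Phi_m(z_m)\to 0$, so that $|K_\varphi(z_m,z_n)|$ is of strictly lower order than $|K_\varphi(z_m,z_m)|$ and $|K_\varphi(z_n,z_n)|$. Sylvester's criterion applied to the principal minors then gives negative definiteness of $\mathcal{M}_\eta$ for all sufficiently small $\eta$.

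I expect the main obstacle to be the precise control of $|\Phi_m(z_m)|\cdot\eta/\Im\Phi_m(z_m)\to 0$. The easy case is $\Im\Phi_m(z_m)\to+\infty$, where the crude Nevanlinna bound $|\Phi_m(z_m)|=O(1/\eta)$ suffices. The delicate case is when $\Im\Phi_m(z_m)$ stays bounded while $\Re\Phi_m(z_m)$ nevertheless grows (e.g. logarithmically, for an asymmetric continuous density of $\sigma$ near $\lambda_m$); there one has to estimate the Hilbert-transform-type integral $\int_{U_m}(t-\lambda_m)/((t-\lambda_m)^{2}+\eta^{2})\,d\sigma(t)$ carefully to confirm its growth is strictly slower than $1/\eta$.
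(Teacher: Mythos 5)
Your diagonal step is sound: the monotone-convergence argument showing $\Im\Phi(\lambda_m+i\eta)/\eta \uparrow b+\int (t-\lambda_m)^{-2}d\sigma(t)=+\infty$ (finiteness would force $\sigma'(\lambda_m)=0$) is correct, and it even gives blow-up for \emph{all} $\eta\to0+$, slightly more than the paper's subsequence statement. The genuine gap is in the off-diagonal/Sylvester step. For $m\ne n$ your chain of estimates gives $|K_\varphi(z_m,z_n)|\le C\bigl(|\Phi_m(z_m)|+|\Phi_n(z_n)|\bigr)+O(1)$, and your ``key ratio estimate'' only asserts $|\Phi_m(z_m)|=o(d_m)$, where $d_m:=\Im\Phi_m(z_m)/\eta$ is the order of the $m$-th diagonal entry. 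That yields $|K_\varphi(z_m,z_n)|=o\bigl(\max(d_m,d_n)\bigr)$, which is \emph{not} ``strictly lower order than both diagonal entries'', and in any case is not enough for negative definiteness: the diagonal entries blow up at different rates in general (an atom at $\lambda_m$ gives $d_m\sim\eta^{-2}$, a continuous positive density at $\lambda_n$ gives $d_n\sim\eta^{-1}$), and a $2\times2$ matrix with diagonal $(-d_m,-d_n)$ and off-diagonal entry of modulus $\epsilon_\eta(d_m+d_n)$ with $\epsilon_\eta\to0$ can have $\epsilon_\eta^2(d_m+d_n)^2>d_md_n$, hence fail to be negative definite. What the Sylvester (or rescaling) argument actually needs is $|K_\varphi(z_m,z_n)|=o\bigl(\sqrt{d_md_n}\bigr)$, and your stated estimates do not deliver it.

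This stronger bound is exactly what the paper's inequality~\eqref{eq:bound_mixed} provides: writing the mixed quotient as $\int_{U_m}\frac{d\widetilde\sigma(t)}{(t-z_m)(t-\overline z_n)}$ and using $2\alpha\beta\le \alpha^2/c+c\beta^2$ gives a bound of order $\rho_m(\eta)=O(\sqrt{d_m})$, not merely $o(d_m)$. In your set-up the same effect comes from a one-line Cauchy--Schwarz: $|\Phi_m(z_m)|\le\int_{U_m}\frac{d\sigma(t)}{|t-z_m|}\le\bigl(\sigma(U_m)\bigr)^{1/2}\Bigl(\int_{U_m}\frac{d\sigma(t)}{|t-z_m|^2}\Bigr)^{1/2}=\bigl(\sigma(U_m)\bigr)^{1/2}\bigl(\Im\Phi_m(z_m)/\eta\bigr)^{1/2}=O(\sqrt{d_m})$. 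With this, off-diagonal entries are $O(\sqrt{d_m})+O(\sqrt{d_n})+O(1)=o(\sqrt{d_md_n})$, the matrix rescaled by $\mathrm{diag}(d_m^{-1/2})$ is $-I+o(1)$, and negative definiteness follows; moreover your anticipated ``delicate case'' (bounded $\Im\Phi_m$ with a slowly growing Hilbert-transform part) evaporates, since the square-root bound needs no case distinction. Note also that both the ratio estimate and the claim that the $P_m$-correction is of lower order (``standard Nevanlinna growth'') are left unproven in your write-up; both follow from this same Cauchy--Schwarz bound. So your architecture --- localization over the $U_k$, diagonal blow-up, off-diagonal control --- coincides with the paper's, but the quantitative off-diagonal step as written fails and must be replaced by the square-root estimate.
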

\begin{proof}
    Since
    \begin{subequations}\label{eq:ident_1tz_1t_2}
    \begin{align}
      \label{eq:ident_1tz}
      \frac{1}{t-z} &= \frac{z+t-z}{t(t-z)} = \frac{z}{t(t-z)}+\frac{1}{t}
                      \quad \text{and}\\
      \label{eq:ident_1t_2}
      \frac{1}{t}-\frac{t}{1+t^2} &= \frac{1+t^2 -t^2}{t(1+t^2)} = \frac{1}{t(1+t^2)},
    \end{align}
    \end{subequations}
    from the expression~\eqref{eq:Phi_N} we obtain
    \begin{equation}\label{eq:ident_phi_int}
        \begin{aligned}
        \varphi(z)={} & b + \frac az
           + \frac 1z \int_{-\infty}^\infty\left(\frac{1}{t-z}-\frac{t}{1+t^2}\right)d\sigma(t)
        \\
            ={}& b + \frac az
              + \int_{-\infty}^\infty\left(\frac{1}{t-z}+\frac{1}{z(1+t^2)}\right)\frac{d\sigma(t)}t
            = \sum_{n=0}^{\varkappa} \varphi_n(z),
        \end{aligned}
    \end{equation}
    where
    \[
        \varphi_n(z)
        = \int_{U_n}\frac{1}{t-z}\cdot\frac{d\sigma(t)}{t},\quad n=1,\dots,\varkappa,
    \]
    are the terms dominant on the intervals~$U_n$, and
    \[
    \varphi_0(z)
        \colonequals b + \frac {\widetilde a}z
           + \frac 1z \int_{U_0}\left(\frac{1}{t-z}-\frac{t}{1+t^2}\right)d\sigma(t)
           \quad\text{with}\quad
    \widetilde a\colonequals a + \sum_{n=1}^{\varkappa} \int_{U_n}\frac{d\sigma(t)}{t+t^3}
    \]
    contains the remainder term. Let us additionally assume $z_n\colonequals\lambda_n+i\eta$.

    On the one hand, each function~$\varphi_n$ is holomorphic outside~$U_n$; therefore,
    when~$m,k\ne n$ we have that the limits
    \begin{equation}\label{eq:lim_boundary}
        \lim_{\eta\to 0}\frac{\varphi_n(z_m) - \varphi_n(\overline z_k)}{z_m-\overline z_k}
        =
        \begin{cases}
            \dfrac{\varphi_n(\lambda_m) - \varphi_n(\lambda_k)}{\lambda_m-\lambda_k},
            &\text{if } k\ne m,\\
            \varphi'_n(\lambda_m),
            &\text{if } k=m\\
        \end{cases}
    \end{equation}
    are finite as~$n=0,\dots,\varkappa$. On the other hand, with the notation
    $\widetilde\sigma(t)=
    \int_{-\lambda_\varkappa-\delta}^t|s|^{-1}d\sigma(s)$, that is
    $\widetilde\sigma(t)=
    -\int_{-\lambda_\varkappa-\delta}^ts^{-1}d\sigma(s)$ when~$t<0$, for~$n\ne 0$ we have
    \[
        \begin{aligned}
            \rho^2_n(\eta)\colonequals{}&
            -\frac{\varphi_n(z_n) - \varphi_n(\overline z_n)}{z_n-\overline z_n}
            ={}\int_{U_n}\frac{t - \overline z_n - t + z_n}
                       {(z_n-\overline z_n)(t-z_n)(t-\overline z_n)}d\widetilde\sigma(t)
            ={}\int_{U_n}\frac{d\widetilde\sigma(t)}{|t-z_n|^2}\\
            \ge{}&\int_{\lambda_n-\eta}^{\lambda_n+\eta}\frac{d\widetilde\sigma(t)}{(t-\lambda_n)^2+\eta^2}
            \ge\int_{\lambda_n-\eta}^{\lambda_n+\eta}\frac{d\widetilde\sigma(t)}{2\eta^2}
            =
            \frac 1\eta \cdot
            \frac {\widetilde\sigma (\lambda_n+\eta)-\widetilde\sigma (\lambda_n-\eta)}{2\eta}.
        \end{aligned}
    \]
    Consequently, $\limsup_{\eta\to0+}\left(\eta\cdot\rho^2_n(\eta)\right)$ is positive
    or~$+\infty$ because~$\lambda_n$ is a point of increase of~$\sigma(t)$. Moreover, we
    fix~$\rho_n(\eta)>0$ for definiteness. In terms of big $O$ notation, there exists a
    sequence of positive numbers~$\eta_1, \eta_2, \dots$ tending to zero such that
    \begin{equation}\label{eq:lim_rho}
        \frac 1{\rho_n(\eta_k)} = O\left(\sqrt{\eta_k}\right)
        \quad\text{as}\quad k\to +\infty
        \quad\text{and}\quad n=1,\dots,\varkappa.
    \end{equation}

    According to~\eqref{eq:lim_boundary}, we additionally have
    \begin{equation} \label{eq:phi_n_n}
        -\frac{\varphi(z_n) - \varphi(\overline z_n)}{z_n-\overline z_n}=
        -\frac{\varphi_n(z_n) - \varphi_n(\overline z_n)}{z_n-\overline z_n}
        -\sum_{m \ne n}\frac{\varphi_m(z_n) - \varphi_m(\overline z_n)}{z_n-\overline z_n}
        =\rho^2_n(\eta)+O(1)
    \end{equation}
    when~$\eta$ is assumed to be small. Furthermore,
    \[
        -\frac{\varphi_n(z_n) - \varphi_n(\overline z_m)}{z_n-\overline z_m}
        =\int_{U_n}\frac{t - \overline z_m - t + z_n}
                       {(z_n-\overline z_m)(t-z_n)(t-\overline z_m)}d\widetilde\sigma(t)
        =\int_{U_n} \frac{d\widetilde\sigma(t)}{(t-z_n)(t-\overline z_m)},
    \]
    which implies (with the help of the elementary inequality
    $2\alpha\beta\le \frac{\alpha^2}{c} + c\beta^2$ valid for any positive numbers)
    \begin{equation}\label{eq:bound_mixed}
    \begin{aligned}
            \left|\frac{\varphi_n(z_n) - \varphi_n(\overline z_m)}{z_n-\overline z_m}\right|
            &\le \int_{U_n} \frac{d\widetilde\sigma(t)}{|t-z_n|\cdot|t-\overline z_m|}
            \le \int_{U_n} \frac{d\widetilde\sigma(t)}{2\rho_n(\eta)|t-z_n|^2}
          + \int_{U_n} \frac{\rho_n(\eta)\,d\widetilde\sigma(t)}{2|t-z_m|^2}
          \\
        &= \frac {1}{2\rho_n(\eta)}\rho^2_n(\eta)
          + \frac {\rho_n(\eta)}{2} \int_{U_n} \frac{d\widetilde\sigma(t)}{|t-z_m|^2}
        \le C(n,m,\delta)\rho_n(\eta),
    \end{aligned}
    \end{equation}
    because the distance between~$U_n$ and~$z_m$ is more than~$\delta$. The
    factor~$C(n,m,\delta)>0$ in~\eqref{eq:bound_mixed} is independent of~$\eta$. The finiteness
    of~\eqref{eq:lim_boundary} gives
    \[
    \frac{\varphi(z_n) - \varphi(\overline z_m)}
    {z_n-\overline z_m}
    =
    \frac{\varphi_n(z_n) - \varphi_n(\overline z_m)}
    {z_n-\overline z_m}
    + \frac{\varphi_m(z_n) - \varphi_m(\overline z_m)}
    {z_n-\overline z_m}
    + O(1)\quad\text{as}\quad\eta\to 0+.
    \]
    This can be combined with the estimate~\eqref{eq:bound_mixed}, thus giving us for
    small~$\eta$
    \begin{equation} \label{eq:phi_n_m_est}
                \left|\frac{\varphi(z_n) - \varphi(\overline z_m)}
                    {(z_n-\overline z_m)\rho_n(\eta)\rho_m(\eta)}\right|
            \le
            \frac{C(n,m,\delta)}{\rho_m(\eta)}+\frac{C(m,n,\delta)}{\rho_n(\eta)}
            +O\left(\frac{1}{\rho_n(\eta)\rho_m(\eta)}\right).
    \end{equation}
    The relations~\eqref{eq:phi_n_n} and~\eqref{eq:phi_n_m_est} allow us to make the final step
    in the proof. The substitution $\xi_n \mapsto \zeta_n/\rho_n(\eta)$ gives us
    \begin{gather}\label{eq:h_is_neg_def}
        h_\varphi\left(\frac{\zeta_1}{\rho_1(\eta)},\dots,\frac{\zeta_\varkappa}{\rho_\varkappa(\eta)}
        \Big\vert z_1,\dots,z_\varkappa\right)
            = \!\!\sum_{n,m=1}^{\varkappa} \!\!
            \frac{\varphi(z_n) - \varphi(\overline z_m)} {z_n-\overline z_m}
            \cdot \frac{\zeta_n\overline\zeta_m}{\rho_n(\eta)\rho_m(\eta)}
            = R(\eta) -\sum_{n=1}^{\varkappa} |\zeta_n|^2,
            \\
            \label{eq:h_is_neg_def_R}
        \text{where}\quad
        \big|R(\eta)\big| = \sum_{n=1}^{\varkappa} \Bigg(
            |\zeta_n|^2O\bigg(\frac 1{\rho_n^2(\eta)}\bigg)
            + \sum_{m\ne n}
            \zeta_n\overline\zeta_m
            O\bigg(\frac 1{\rho_m(\eta)} + \frac 1{\rho_n(\eta)}\bigg)
        \Bigg)
        .
    \end{gather}
    According to~\eqref{eq:lim_rho}, in each neighbourhood of zero we can
    choose~$\eta\in\big\{\eta_k\big\}_{k=1}^\infty$, such that the inequality
    \( \frac 1{\rho_n(\eta)} \le M_n\sqrt{\eta} \) holds true for a fixed number~$M_n>0$
    dependent only on~$\varphi$ and~$n$. For such choice of~$\eta$, the
    estimate~\eqref{eq:h_is_neg_def_R} implies
    $\big|R(\eta)\big|\le M\sqrt{\eta}\cdot\sum_{n=1}^{\varkappa} |\zeta_n|^2$ with some
    fixed~$M$. Therefore, the sign of the Hermitian form~\eqref{eq:h_is_neg_def} will be
    determined by the last term~$-\sum_{n=1}^{\varkappa} |\zeta_n|^2$ alone for every set of
    complex numbers~$\{\zeta_1,\dots,\zeta_{\varkappa}\}$ as soon
    as~$\eta\in\big\{\eta_k\big\}_{k=1}^\infty$ is small enough.
\end{proof}

\begin{prop*}\label{prop:2}
    Under the conditions of Proposition~\ref{prop:1} assume that $\Phi(z)$ is regular in the
    interval~$(-\varepsilon,0)$ and $0<\Phi(0-)\le\infty$. Then the Hermitian form
    \( h_\varphi(\xi_0, \dots, \xi_\varkappa \vert z_0, \dots, z_\varkappa) \),
    where~$z_m=\lambda_m+i\eta$ for $m=1,\dots,\varkappa$ and $z_0=-\sqrt\eta+i\mu$, is negative
    definite when the numbers~$\eta>0$ and~$\mu>0$ are chosen appropriately.
\end{prop*}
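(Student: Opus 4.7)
The plan is to extend the rescaling argument from Proposition~\ref{prop:1} to the bordered $(\varkappa+1)\times(\varkappa+1)$ Hermitian matrix $H=[H_{nm}]_{n,m=0}^{\varkappa}$ of the form $h_\varphi(\cdot\mid z_0,z_1,\dots,z_\varkappa)$. Its lower-right $\varkappa\times\varkappa$ block has already been reduced to $-I_\varkappa+o(1)$ by that argument, via the substitution $\xi_n\mapsto\zeta_n/\rho_n(\eta)$ along the subsequence $\{\eta_k\}$. What remains is to introduce an analogous scale $\rho_0=\rho_0(\eta,\mu)$ and to choose $\mu>0$ as a function of $\eta$ so that the new diagonal entry becomes $-1$, the new cross entries become $o(1)$, and the full matrix reduces to $-I_{\varkappa+1}+o(1)$; it is then negative definite.

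For the diagonal entry, the assumed regularity of $\Phi$ on $(-\varepsilon,0)$ makes $\varphi=\Phi/z$ real-analytic there, and Taylor expansion gives $\Im\varphi(-\sqrt\eta+i\mu)=\mu\,\varphi'(-\sqrt\eta)+O(\mu^3)$. Differentiating $\varphi=\Phi/z$ at $t=-\sqrt\eta$ yields $\varphi'(-\sqrt\eta)=-\Phi'(-\sqrt\eta)/\sqrt\eta-\Phi(-\sqrt\eta)/\eta$. By Remark~\ref{rem:defN}, $\Phi$ is strictly increasing on $(-\varepsilon,0)$, so $\Phi'(-\sqrt\eta)>0$, while $\Phi(-\sqrt\eta)>0$ by continuity from the assumption $\Phi(0-)>0$; both summands are therefore negative, and the second alone forces $-\varphi'(-\sqrt\eta)\ge\Phi(-\sqrt\eta)/\eta\to+\infty$ as $\eta\to 0+$. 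Choosing $\mu$ so small that the $O(\mu^3)$ error is dominated and setting $\rho_0^2\colonequals-\Im\varphi(z_0)/\mu$, one obtains $H_{00}=-\rho_0^2$ with $\rho_0(\eta,\mu)^2\ge\tfrac12\Phi(-\sqrt\eta)/\eta\to+\infty$.

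For the cross entries $H_{0n}=\bigl(\varphi(z_0)-\overline{\varphi(z_n)}\bigr)/(z_0-\overline z_n)$ we decompose $\varphi=\sum_{k=0}^{\varkappa}\varphi_k$ as in the proof of Proposition~\ref{prop:1}. The denominator satisfies $|z_0-\overline z_n|\ge|\lambda_n|/2$ for small $\eta$. The pieces with $k\notin\{0,n\}$ contribute $O(1)$ because $\varphi_k$ is holomorphic on a fixed neighbourhood of both $z_0$ and $z_n$. The piece $\varphi_n$ contributes $O(\rho_n(\eta))$ by the Cauchy--Schwarz bound $|\varphi_n(z_n)|\le\widetilde\sigma(U_n)^{1/2}\rho_n(\eta)$ together with the uniform estimate $|\varphi_n(z_0)|=O(1)$. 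For the piece $\varphi_0$, the auxiliary function $\Phi_0(z)\colonequals z\varphi_0(z)=bz+\widetilde a+\int_{U_0}\bigl(\tfrac{1}{t-z}-\tfrac{t}{1+t^2}\bigr)d\sigma(t)$ is itself an $\mathcal N$-function regular at $-\sqrt\eta$ and at each $\lambda_n$, with $\Phi_0(0-)=\Phi(0-)$ and $\Phi_0(-\sqrt\eta)=\Phi(-\sqrt\eta)+O(\sqrt\eta)$; thus $\varphi_0(z_0)=-\Phi_0(-\sqrt\eta)/\sqrt\eta+O(\mu)$ while $\varphi_0(z_n)=O(1)$, giving the contribution $O(\Phi(-\sqrt\eta)/\sqrt\eta)$. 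Summing, $|H_{0n}|\le C\bigl(\rho_n(\eta)+\Phi(-\sqrt\eta)/\sqrt\eta\bigr)$; dividing by $\rho_0\rho_n$ produces a ratio of order $O(\sqrt{\eta/\Phi(-\sqrt\eta)})+O(1/\rho_n)$, both of which vanish along $\{\eta_k\}$.

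Assembling these estimates with the $\varkappa\times\varkappa$ analysis from Proposition~\ref{prop:1}, the rescaled Hermitian form equals $-\sum_{n=0}^{\varkappa}|\zeta_n|^2+R(\eta,\mu)$ with $|R|=o(\sum|\zeta_n|^2)$ provided $\eta\in\{\eta_k\}$ is small enough and $\mu$ is chosen subsequently small; this yields the asserted negative definiteness. The main obstacle is the balance in the cross-term estimate: the specific placement $z_0=-\sqrt\eta+i\mu$ is precisely what pairs the singular scale $|\varphi_0(z_0)|\sim\Phi(-\sqrt\eta)/\sqrt\eta$ with the diagonal scale $\rho_0\sim\sqrt{\Phi(-\sqrt\eta)/\eta}$, so that their ratio vanishes as $\eta\to 0+$ along the subsequence; a different placement would either spoil the diagonal or be overwhelmed by the cross entries.
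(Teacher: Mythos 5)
Your proposal follows the same overall scheme as the paper (rescale the bordered matrix by $\rho_0,\dots,\rho_\varkappa$, show the off-diagonal entries vanish after rescaling), but the decisive cross-term estimate contains a genuine gap, and it occurs precisely in the case the proposition exists for, namely $\Phi(0-)=\infty$ (the source of representation~\eqref{eq:nu_inf}). From your own bounds $|H_{0n}|\le C\bigl(\rho_n(\eta)+\Phi(-\sqrt\eta)/\sqrt\eta\bigr)$ and $\rho_0^2\ge\tfrac12\,\Phi(-\sqrt\eta)/\eta$, dividing by $\rho_0\rho_n$ gives $O\bigl(1/\rho_0\bigr)+O\bigl(\sqrt{\Phi(-\sqrt\eta)}\,/\rho_n\bigr)$, not $O\bigl(\sqrt{\eta/\Phi(-\sqrt\eta)}\bigr)+O\bigl(1/\rho_n\bigr)$ as you claim: the second term carries an extra factor $\sqrt{\Phi(-\sqrt\eta)}$, which is bounded only if $\Phi(0-)<\infty$. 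The same slip is visible in your closing heuristic: the ratio of the singular scale $\Phi(-\sqrt\eta)/\sqrt\eta$ to the diagonal scale $\sqrt{\Phi(-\sqrt\eta)/\eta}$ equals $\sqrt{\Phi(-\sqrt\eta)}$, which does not vanish but tends to $\sqrt{\Phi(0-)}$, i.e. to $+\infty$ in the critical case. So, as written, your argument proves the proposition only when $\Phi(0-)$ is finite.

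The gap is repairable, but it needs an ingredient you never state: an upper bound on the blow-up of $\Phi$ at the origin. Since $\sigma$ is constant on $(-\varepsilon,0)$, one has $\int_0^\varepsilon\frac{d\sigma(t)}{t+x}\le\frac{\sigma(\varepsilon-)-\sigma(0)}{x}$ while the remaining terms of~\eqref{eq:Phi_N} stay bounded for small $x>0$; hence $\Phi(-\sqrt\eta)=O\bigl(\eta^{-1/2}\bigr)$, and then $\sqrt{\Phi(-\sqrt\eta)}\,/\rho_n=O\bigl(\eta^{-1/4}\bigr)O\bigl(\sqrt\eta\bigr)=O\bigl(\eta^{1/4}\bigr)$ along the subsequence of~\eqref{eq:lim_rho}, which restores your conclusion. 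This missing control is exactly what the paper supplies through the upper estimate~\eqref{eq:est_rho_0} (that is, $\rho_0=O\bigl(\eta^{-3/4}\bigr)$), combined with~\eqref{eq:est_psi_0_n}, to get the $O\bigl(\eta^{1/4}\bigr)$ cross terms. Apart from this omission your route is essentially the paper's: the diagonal entry via the derivative of $\varphi$ on $(-\varepsilon,0)$ with $\mu$ chosen after $\eta$, the Cauchy--Schwarz and far-field bounds for the other blocks, and a mild variant of the decomposition near zero (the $\varphi_0$ of Proposition~\ref{prop:1} in place of the cut-off part $\psi_0$) are all sound.
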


\begin{proof}
    Split the function~$\varphi(z)$ into two parts $\psi_0(z)$ and~$\psi_1(z)$ such that
    $\varphi(z)=\psi_0(z)+\psi_1(z)$ and
    \[
    \begin{aligned}
        \psi_1(z) &\colonequals
        b+\frac 1z
        \int_{\mathbb R\setminus(-\varepsilon,\varepsilon)}\left(\frac{1}{t-z}-\frac{t}{1+t^2}
            -\frac{1}{t+t^3}\right)d\sigma(t).
    \end{aligned}
    \]
    The integral here is analytic for~$|z|<\varepsilon$ and vanishes at the origin
    (see~\eqref{eq:ident_1t_2}). The function~$\psi_1(z)$ therefore is also analytic
    for~$|z|<\varepsilon$. The part~$\psi_0(z)$ has the form
    \begin{align*}
        \psi_0(z) \colonequals{}& \frac az
        + \frac 1z \int_{(-\varepsilon,\varepsilon)}\left(\frac{1}{t-z} - \frac{t}{1+t^2}\right)d\sigma(t)
        + \frac 1z \int_{\mathbb R\setminus(-\varepsilon,\varepsilon)}\frac{d\sigma(t)}{t+t^3}\\
        ={}& \frac Az
        + \frac 1z \int_0^\varepsilon\frac{d\sigma(t)}{t-z},
        \quad\text{where we put}\quad A\colonequals
        a + \int_{\mathbb R\setminus(-\varepsilon,\varepsilon)}\frac{d\sigma(t)}{t+t^3}
        - \int_0^\varepsilon\frac{td\sigma(t)}{1+t^2},
    \end{align*}
    i.e. $A$ is a finite real constant. The integral over~$(-\varepsilon,0)$ is zero in the
    representation of~$\psi_0$, because the function~$\Phi(z)$ is regular in this interval, and
    thus~$\sigma(t)$ is constant for $-\varepsilon<t<0$.

    First assume that $x$ varies on $(-\varepsilon,0)$ close enough to~$0$, so that $\Phi(x)>3M$
    with some fixed~$M>0$. On the one hand, one of the Cauchy-Riemann equations and the
    condition $\Phi'(x)\ge 0$ (see Remark~\ref{rem:defN}) imply
    \begin{equation*}
        \frac{\partial\Im\varphi(x+iy)}{\partial y}\bigg|_{y=0}
        = \frac{d}{dx}\varphi(x)
        = \frac{d}{dx}\frac{\Phi(x)}{x}
        = \frac{\Phi'(x)x - \Phi(x)}{x^2}
        < -\frac{3M}{x^2},
    \end{equation*}
    Given~$x$ we can chose~$\mu\in(0,-x)$ such that
    \begin{equation}\label{eq:choice_of_mu}
        \left|\frac{\partial\Im\varphi(x+iy)}{\partial y}\bigg|_{y=0} -
            \frac{\varphi(x+i\mu)-\varphi(x-i\mu)}{2i\mu}\right|\le
        \frac{M}{x^2}
    \end{equation}
    relying on the fact that~$\varphi(x+iy)$ is smooth for real~$y$. The last two inequalities
    together imply that
    \[
        \frac{\varphi(z_0)-\varphi(\overline z_0)}{z_0-\overline z_0}
        =\frac{\varphi(x+i\mu)-\varphi(x-i\mu)}{(x+i\mu)-(x-i\mu)}
        < -\frac{3M}{x^2} + \frac{M}{x^2}
        = -\frac{2M}{x^2}
    \]
    for $z_0=x+i\mu$. Therefore,
    \begin{equation}\label{eq:phi_prime_at_zero}
        \rho_0^2(x^2)\colonequals
        -\frac{\psi_0(z_0)-\psi_0(\overline z_0)}{z_0-\overline z_0}
        =-\frac{\varphi(z_0)-\varphi(\overline z_0)}{z_0-\overline z_0}
         +\frac{\psi_1(z_0)-\psi_1(\overline z_0)}{z_0-\overline z_0}
        \ge \frac{M}{x^2}
    \end{equation}
    for small enough~$|x|$ on account of the smoothness of~$\psi_1(z)$. We
    assume~$\rho_0(x^2)>0$ for definiteness.

    On the other hand, the definition of~$\psi_0(z)$ implies that
    \begin{equation}\label{eq:rho0_in_terms_psi0}
    \begin{aligned}
        \frac{\psi_0(z_0) - \psi_0(\overline z_m)}{z_0 - \overline z_m}
        ={}& \frac{A/z_0-A/\overline z_m}{z_0 - \overline z_m}
        + \frac 1{z_0\overline z_m(z_0-\overline z_m)}
        \int_0^\varepsilon\left(
            \frac{\overline z_m}{t-z_0}-\frac{z_0}{t-\overline z_m}
        \right)d\sigma(t)\\
        ={}&
        \frac 1{z_0\overline z_m}\left(-A
            + \int_0^\varepsilon
            \frac{\overline z_m(t-\overline z_m)- z_0(t-z_0)}
            {(t-\overline z_m)(t-z_0)(z_0-\overline z_m)}d\sigma(t)
        \right)\\
        ={}&
        -\frac 1{z_0\overline z_m}\left(A
            + \int_0^\varepsilon \frac{t - z_0 -\overline z_m}
            {(t-\overline z_m)(t-z_0)}d\sigma(t)\right),
        \quad
        m=0,\dots,\varkappa.
    \end{aligned}
    \end{equation}
    The inequalities~$\Big|\frac{-z_0}{t-z_0}\Big|\le 1$ and,
    hence,~$\Big|\frac{-z_0}{(t-\overline z_m)(t-z_0)}\Big|\le \frac{1}{|z_m|}$ are valid for
    all~$t\ge 0$. Let us apply the latter to estimating the absolute value of the expression
    \eqref{eq:rho0_in_terms_psi0}:
    \begin{equation}\label{eq:finite_difference_psi0}
    \begin{aligned}
            \left|\frac{\psi_0(z_0) - \psi_0(\overline z_m)}{z_0 - \overline z_m}\right|
        \le{}&
        \frac 1{|z_0z_m|}\left(|A|
            + \int_0^\varepsilon
            \left|\frac{- z_0}{(t-\overline z_m)(t-z_0)}
                + \frac{1}{(t-z_0)}
            \right|d\sigma(t)
        \right)\\
        \le{}&
        \frac {|A|}{|z_0z_m|}
        + \frac 1{|z_0z_m^2|} \int_0^\varepsilon d\sigma(t)
        + \frac 1{|z_0z_m|}\int_0^\varepsilon\frac {d\sigma(t)}{|t-z_0|}.
    \end{aligned}
    \end{equation}
    In particular, putting~$m=0$ in~\eqref{eq:finite_difference_psi0} gives us    
    \begin{equation}\label{eq:est_rho_0}
        \rho_0^2(x^2)=
        \left|\frac{\psi_0(z_0) - \psi_0(\overline z_0)}{z_0 - \overline z_0}\right|
        \le
        \frac 2{|z_0^3|}\int_0^\varepsilon d\sigma(t) + \frac{|A|}{|z_0^2|}
        \le
        2\frac {\sigma(\varepsilon-)-\sigma(0)}{|x|^3} + \frac{|A|}{x^2},
    \end{equation}
    which complements
    \begin{equation}\label{eq:est_psi_0_0}
        \left|\frac{\varphi(z_0) - \varphi(\overline z_0)}{z_0 - \overline z_0}
            +
                    \rho_0^2(x^2)
        \right|
        =
        \left|
            \frac{\psi_1(z_0) - \psi_1(\overline z_0)}{z_0 - \overline z_0}
        \right|
        = O\left(1\right)
        \quad\text{as}\quad x\to 0,
    \end{equation}
    where~$O\left(1\right)$ on the right-hand side does not depend on~$\mu\in(0,\varepsilon)$. Now recall
    that~$\Re z_\varkappa=\lambda_\varkappa<\dots<\Re z_1=\lambda_1<-\varepsilon$. Since
    $|t-z_0| = t-x +\mu < t-2x$ provided that~$t\ge 0$ and~$\mu<|x|$,
    from~\eqref{eq:finite_difference_psi0} we obtain
    \begin{equation}\label{eq:est_psi_0_n}
    \begin{aligned}
            \left|\frac{\psi_0(z_0) - \psi_0(\overline z_m)}{z_0 - \overline z_m}\right|
        &\le
        \frac 1{|z_0z_m^2|} \int_0^\varepsilon d\sigma(t)
        + \frac {|A|}{|z_0z_m|}
        + \frac 1{|z_0z_m|}\int_0^\varepsilon\frac {|t-z_0|}{|t-z_0|^2}d\sigma(t)
        \\
        &\le
        \frac 1{|z_0z_m^2|} \int_0^\varepsilon d\sigma(t)
        + \frac {2|A|}{|z_0z_m|}
        + \frac{|z_0|}{|z_m|}\left(
            \frac {A}{|z_0|^2}
            + \frac 1{|z_0|^2}\int_0^\varepsilon\frac {t-2x}{|t-z_0|^2}d\sigma(t)\right)\\
        &\xlongequal{\eqref{eq:rho0_in_terms_psi0}}
        \frac 1{|z_0z_m^2|} \int_0^\varepsilon d\sigma(t)
        + \frac {2|A|}{|z_0z_m|}
        + \frac{|z_0|}{|z_m|}\left(
            -\frac{\psi_0(z_0) - \psi_0(\overline z_0)}{z_0 - \overline z_0}\right)\\
        &\le
        \frac {\sigma(\varepsilon-)-\sigma(0)}{|x|\lambda_m^2}
        + \frac {2|A|}{|x\lambda_m|}
        + \frac{2|x|}{|\lambda_m|}\rho_0^2(x^2)
        =O\left(\frac 1x\right) + O\left(x\rho_0^2(x^2)\right)
        ,
    \end{aligned}
    \end{equation}
    where~$x$ tends to zero and~$m=1,\dots,\varkappa$.
    
    To implement the same technique as in the proof of Proposition~\ref{prop:1} it is enough to
    put~$x\colonequals-\sqrt\eta$ and to study the order of summands in the
    form~\(h_\varphi(\xi_0, \dots, \xi_\varkappa \vert z_0, \dots, z_\varkappa)\). In addition
    to~$x>-\varepsilon$, we suppose that~$x>-\delta$, where the positive number~$\delta$ is
    defined in~\eqref{eq:def_delta}; consequently~$\eta<\min\{\varepsilon^2,\delta^2\}$. We
    regard~$\eta$ as tending to zero, so the conditions~\eqref{eq:phi_prime_at_zero}
    and~\eqref{eq:est_rho_0}--\eqref{eq:est_psi_0_0} imply that
    \begin{gather}\label{eq:est_rho_lu_0}
        \frac{1}{\rho_0(\eta)}=O\left(\sqrt\eta\right),\quad
        \rho_0(\eta) = O\left(\eta^{-\frac34}\right),
        \quad\text{and thus}\\[2pt]
        \label{eq:est_phi_0_0}
        \frac{\varphi(z_0) - \varphi(\overline z_0)}{(z_0 - \overline z_0)\rho_0^2(\eta)}
        =-1 + O\left(\eta\right).
    \end{gather}

    Now we make use of the same notation as in the proof of Proposition~\ref{prop:1}. If
    $m,n\ne 0$ and $m\ne n$, then the estimates~\eqref{eq:phi_n_n} and~\eqref{eq:phi_n_m_est}
    concerning $\varphi(z_1)$, \dots, $\varphi(z_\varkappa)$ are valid. Since the distance
    between~$U_n$ and~$z_0$ is more than~$\delta$, the inequality~\eqref{eq:bound_mixed} is
    satisfied on condition that~$m=0\ne n$. Then~\eqref{eq:bound_mixed}
    and~\eqref{eq:est_psi_0_n} give us the following:
    \begin{equation}\label{eq:est_phi_0_n}
    \begin{aligned}
            \left|
                \frac{\varphi(z_0) - \varphi(\overline z_n)}
                {z_0-\overline z_n}
            \right|
            \le{}&
            \left|
                \frac{\varphi_n(z_0) - \varphi_n(\overline z_n)}
                {z_0-\overline z_n}
            \right|
            +
            \left|
                \frac{\psi_0(z_0) - \psi_0(\overline z_n)}
                {z_0-\overline z_n}
            \right|\\
            &+
            \left|
                \frac{\big(\varphi(z)-\varphi_n(z_0)-\psi_0(z_0)\big)
                    - \big(\varphi(\overline z_n)-\varphi_n(\overline z_n)-\psi_0(\overline z_n)\big)}
                {z_0-\overline z_n}
            \right|
        \\
        \overset{\eqref{eq:bound_mixed}}\le{}&
        C(n,0,\delta)\,\rho_n(\eta)+
        \left|\frac{\psi_0(z_0) - \psi_0(\overline z_n)}
            {z_0 - \overline z_n}\right|
        +O\left(1\right)\\
        \xlongequal{\eqref{eq:est_psi_0_n}}{}&
        C(n,0,\delta)\,\rho_n(\eta)
        +O\Big(\eta^{-\frac 12}\Big)
        +O\left(\sqrt{\eta}\rho_0^2(\eta)\right)
        +O\left(1\right)
    \end{aligned}
    \end{equation}
    as~$\eta\to 0+$. Assume that~$\eta$ is taken from the
    sequence~$\big\{\eta_k\big\}_{k=1}^\infty$ corresponding to~\eqref{eq:lim_rho} and that the
    choice of~$\mu\in(0,\sqrt{\eta})$ satisfies the condition~\eqref{eq:choice_of_mu}.
    Then
    \[
    \begin{aligned}
        \left|\frac{\varphi(z_n) - \varphi(\overline z_0)}
            {(z_n-\overline z_0)\rho_n(\eta)\rho_0(\eta)}\right|
        ={}&
        \left|
            \frac{\varphi(z_0) - \varphi(\overline z_n)}
            {(z_0-\overline z_n)\rho_0(\eta)\rho_n(\eta)}
        \right|
        \\
        \xlongequal{\eqref{eq:est_phi_0_n}\text{ and }\eqref{eq:est_rho_lu_0}}{}&
        O\left(\sqrt\eta\right)
        +\left(O\Big(\eta^{-\frac12+\frac12}\Big)
        +O\Big(\eta^{\frac12-\frac34}\Big)
        +O\Big(\eta^{\frac12}\Big)\right)\frac 1{\rho_n(\eta)}\\
        \xlongequal{\eqref{eq:lim_rho}}{}&
        O\left(\sqrt\eta\right)
        +O\Big(\eta^{-\frac14}\Big)O\left(\sqrt\eta\right)
        =O\left(\sqrt[4]{\eta}\right).
    \end{aligned}
    \]
    This estimate together with~\eqref{eq:est_phi_0_0},~\eqref{eq:lim_rho},~\eqref{eq:phi_n_n}
    and~\eqref{eq:phi_n_m_est} yields that
    \[
    \begin{aligned}
    h\left(\frac{\zeta_0}{\rho_0(\eta)},\dots,\frac{\zeta_\varkappa}{\rho_\varkappa(\eta)}
        \Big\vert z_0,\dots,z_\varkappa\right)
        &= \!\sum_{n,m=1}^{\varkappa}
        \frac{\varphi(z_n) - \varphi(\overline z_m)}
             {(z_n-\overline z_m)\rho_n(\eta)\rho_m(\eta)}
             \,\zeta_n\overline\zeta_m\\
        &= - \sum_{m=0}^{\varkappa}|\zeta_m|^2 +O\left(\sqrt[4]{\eta}\right)
        \sum_{n,m=0}^{\varkappa} \zeta_m\overline\zeta_n,
    \end{aligned}
    \]
    where $O\left(\sqrt[4]{\eta}\right)$ does not depend on~$\zeta_0$, \dots, $\zeta_\varkappa$.
    That is, this Hermitian form is negative definite provided that the value
    of~$\eta\in\big\{\eta_k\big\}_{k=1}^\infty$ is small enough.
\end{proof}

\begin{proof}[Proof of Theorem~\ref{th:1}]
    Suppose that~$\Phi(z)=z\varphi(z)$ can be represented as in~\eqref{eq:phi_form_2_with_cond}.
    Then for specially chosen numbers~$z_1$, \dots, $z_\varkappa\notin\mathbb R$ the Hermitian
    form $h_\varphi(\xi_1, \dots, \xi_\varkappa \vert z_1,\dots, z_\varkappa)$ has $\varkappa$
    negative squares by Proposition~\ref{prop:1}. Let us show that this is the greatest possible
    number of negative squares in the form
    $h[\varphi]\colonequals h_{\varphi}(\xi_1, \dots, \xi_k \vert z_1,\dots, z_k)$.

    Denote $\widetilde\sigma(t)=\int_{0}^ts^{-1}d\sigma(s)$. Since the integral
    \[
    0 \le \int_0^\infty\frac{d\sigma(t)}{t(1+t^2)}
    =\int_0^\infty\frac{d\widetilde\sigma(t)}{1+t^2}
    \overset{\eqref{eq:phi_form_2_cond}}{{}\le{}}
      - a - \sum_{i=1}^{\varkappa}\frac{\sigma_i}{\lambda_i}<\infty
    \]
    is finite, we can split the last term of~\eqref{eq:phi_form_2} divided by~$z$ into two parts
    to obtain (cf.~\eqref{eq:ident_phi_int})
    \[
        \varphi(z)
        = b + \frac az + \sum_{i=1}^{\varkappa}
        \left(
            \frac{\sigma_i/\lambda_i}{\lambda_i-z}+\frac{\sigma_i/\lambda_i}{z}
        \right)
        + \int_0^\infty\frac{d\sigma(t)}{t(t-z)}
        + \frac 1z \int_0^\infty\frac{d\sigma(t)}{t(1+t^2)},
    \]
    that is~$\varphi(z)=\widetilde\varphi(z)+\varphi_0(z)$, where
    \[
        \widetilde\varphi(z)\colonequals
        \sum_{i=1}^{\varkappa}\frac{\sigma_i/\lambda_i}{\lambda_i-z}
        \quad\text{and}\quad
        \varphi_0(z) \colonequals
        b
        + \frac {\Phi(0-)}z
        + \int_0^\infty\frac{d\widetilde\sigma(t)}{t-z}.
    \]
    The functions $\varphi_0(z)$ and $-\widetilde\varphi(z)$ have the form~\eqref{eq:Phi_N},
    i.e. belong to the class~$\mathcal N$. For $\mathcal N$-functions and any set of
    numbers~$\{z_1, \dots, z_k\}$ the Hermitian form~\eqref{eq:quad_form} is nonnegative
    definite. That is, the conditions
    \begin{equation*}
        h[\varphi_0]\colonequals
        h_{\varphi_0}(\xi_1, \dots, \xi_k \vert z_1,\dots, z_k)\ge 0
            \quad\text{and}\quad
        h[\widetilde\varphi]\colonequals
        h_{\widetilde\varphi}(\xi_1, \dots, \xi_k \vert z_1,\dots, z_k)\le 0
    \end{equation*}
    holds true. Moreover, since~$\widetilde\varphi(z)$ is a rational function with $\varkappa$
    poles, which is bounded at infinity, the rank of~$h[\widetilde\varphi]$ can be at
    most~$\varkappa$ (see Theorem~3.3.3 and its proof in~\cite[pp.~105--108]{Akhiezer65} or
    Theorem~1 in~\cite[p.~34]{Donoghue}). Therefore, the
    form~$h[\varphi]= h_{\varphi}(\xi_1, \dots, \xi_k \vert z_1,\dots, z_k)$ has at
    most~$\varkappa$ negative squares as a sum of~$h[\varphi_0]$ and~$h[\widetilde\varphi]$.
    (This become evident after the reduction of the Hermitian form~$h[\widetilde\varphi]$ to
    principal axes since~$h[\varphi_0]$ is nonnegative definite irrespectively of coordinates.)

    Suppose that $\Phi(z)$ can be expressed as in~\eqref{eq:phi_form_1_with_cond}, and let
    $\varepsilon_0>0$ be such that~$\Phi(-\varepsilon)>0$ provided that
    $0<\varepsilon<\varepsilon_0$. In particular, it implies $\max_i\lambda_i<-\varepsilon_0$
    since $\Phi(\max_i\lambda_i+)<0$. Proposition~\ref{prop:2} provides a set of
    points~$\{z_0,\dots,z_{\varkappa-1}\}$ such that the corresponding Hermitian
    form~$h[\varphi]$ has~$\varkappa$ negative squares. Let us prove that~$h[\varphi]$ has at
    most~$\varkappa$ squares negative. Consider the function
    \[
    \varphi_\varepsilon(z)
    \colonequals 
    \frac{\Phi(z)-\Phi(-\varepsilon)}{z+\varepsilon} + \frac{\Phi(-\varepsilon)}{z+\varepsilon}
    \xlongequal{\eqref{eq:phi_form_1}}
    b + \frac{\Phi(-\varepsilon)}{z+\varepsilon}
    +\sum_{i=1}^{\varkappa-1}\frac{\sigma_i / (\lambda_i+\varepsilon)}{\lambda_i-z}
    + \int_0^\infty\frac{1}{t-z}\cdot\frac{d\sigma(t)}{t+\varepsilon}.
    \]
    Denote~$\Phi_\varepsilon(z)\colonequals z\varphi_\varepsilon(z)$ and
    $A_i\colonequals\dfrac{\sigma_i\lambda_i}{\lambda_i+\varepsilon}$, then
    \begin{align*}
        \Phi_\varepsilon(z)
        ={}& bz
        + \Phi(-\varepsilon) - \frac{\varepsilon\Phi(-\varepsilon)}{z+\varepsilon}
        + \sum_{i=1}^{\varkappa-1}\frac{A_iz / \lambda_i}{\lambda_i-z}
        + \int_0^\infty\frac{z}{t-z}\cdot\frac{d\sigma(t)}{t+\varepsilon}\\
        \xlongequal{\eqref{eq:ident_1tz}}{}& bz
        + \Phi(-\varepsilon) - \frac{\varepsilon\Phi(-\varepsilon)}{z+\varepsilon}
        + \sum_{i=1}^{\varkappa-1}\frac{A_i}{\lambda_i-z} - \sum_{i=1}^{\varkappa-1}\frac{A_i}{\lambda_i}
        + \int_0^\infty\left(\frac{1}{t-z}-\frac 1t\right)\frac{t\,d\sigma(t)}{t+\varepsilon}\\
        \xlongequal{\eqref{eq:ident_1t_2}}{}& bz
        + \left[
            \Phi(-\varepsilon) - \sum_{i=1}^{\varkappa-1}\frac{A_i}{\lambda_i}
            - \int_0^\infty\frac {d\sigma(t)}{(t+\varepsilon)(1+t^2)}
        \right]\\
        &- \frac{\varepsilon\Phi(-\varepsilon)}{z+\varepsilon}
        + \sum_{i=1}^{\varkappa-1}\frac{A_i}{\lambda_i-z}
        + \int_0^\infty\left(\frac{1}{t-z}-\frac t{1+t^2}\right)\frac{t\,d\sigma(t)}{t+\varepsilon},
    \end{align*}
    i.e. $\Phi_\varepsilon\in\mathcal N$. Moreover, $\Phi_\varepsilon(z)$ is an increasing
    function when $-\varepsilon<z<0$ (see Remark~\ref{rem:defN}) which implies
    \[
    \Phi_\varepsilon(0-)
    = \lim_{z\to0-}\int_0^\infty\frac{z}{t-z}\cdot\frac{d\sigma(t)}{t+\varepsilon}\le 0,
    \]
    since the integrand is negative. That is, the function $\Phi_\varepsilon(z)$ has the
    form~\eqref{eq:phi_form_2_with_cond}. As it is shown above, we have
    $\varphi_\varepsilon\in\mathcal N_\varkappa^+$ for each~$\varepsilon$ between~$0$
    and~$\varepsilon_0$.

    Given a fixed set of points~$\{z_1, \dots, z_k\}$ there exists some positive
    number~$\varepsilon_1<\varepsilon_0$, such that for all $0<\varepsilon<\varepsilon_1$ the
    form~$h[\varphi_\varepsilon]\colonequals h_{\varphi_\varepsilon}(\xi_1, \dots, \xi_k \vert
    z_1,\dots, z_k)$ has at least the same number of negative squares as the form~$h[\varphi]$.
    (Indeed: the characteristic numbers of~$h[\varphi]$ depend continuously on its
    coefficients.) Suppose that the Hermitian form~$h[\varphi]$ has more than~$\varkappa$
    negative squares. Then~$h[\varphi_\varepsilon]$ must have more than~$\varkappa$ negative
    squares as well, which is impossible. Thus, the form~$h[\varphi]$ has at most~$\varkappa$
    negative squares.

    Suppose that~$\varphi\in\mathcal N_\varkappa^+$. Then the function~$\Phi(z)=z\varphi(z)$ can
    be represented as in~\eqref{eq:Phi_N} and the form~$h[\varphi]$ for any set of
    numbers~$\{z_1, \dots, z_k\}$ has at most~$\varkappa$ negative squares (as stated in the
    definition of~$\mathcal N_\varkappa^+$). By Proposition~\ref{prop:1}, the
    function~$\sigma(t)$ appearing in~\eqref{eq:Phi_N} can have at most~$\varkappa$ negative
    points of increase. These points are isolated, and therefore (see
    Definition~\ref{def:poi_inc}) for negative~$t$ the function~$\sigma(t)$ is a step function
    with at most~$\varkappa$ steps. That is, all negative singular points of~$\Phi(z)$ are
    simple poles; they have negative residues since~$\Phi\in\mathcal N$, i.e. $\sigma_i>0$ for
    all~$i$. Here we have two mutually exclusive options:
    $\Phi(0-)\le0$, then $\Phi(z)$ has the form~\eqref{eq:phi_form_2_with_cond} corresponding to
    some~$\varkappa_0\le\varkappa$, and
    $0<\Phi(0-)\le\infty$, i.e. $\Phi(z)$ has the form~\eqref{eq:phi_form_1_with_cond}
    corresponding to~$\varkappa_0\le\varkappa+1$.
    The sufficiency (first) part of the current proof shows
    that~$\varphi\in\mathcal N_{\varkappa_0}^+$ in both cases. Since the
    classes~$\mathcal N_{\varkappa_0}^+$ and~$\mathcal N_{\varkappa}^+$ are disjoint by
    definition, we necessarily have~$\varkappa_0=\varkappa$.
\end{proof}

\section*{Acknowledgments}
This work appeared, inter alia, by virtue of my collaboration visits to Shanghai in 2014
(although they were devoted to other mathematical problems). I am grateful to Mikhail Tyaglov
for organizing these visits, to {\otherlanguage{ngerman}Technische Universit\"at Berlin} and
Shanghai Jiao Tong University for the financial support. I also thank Olga Holtz for her
encouragement and Victor Katsnelson for his attention.

\end{document}